\let\color@begingroup\relax
   \let\color@endgroup\relax}{}%
\def\fix@ieeecolor@hbox#1{%
  \hbox{\color@begingroup#1\color@endgroup}}
\patchcmd\@makecaption{\hbox}{\fix@ieeecolor@hbox}{}{\FAILED}
\patchcmd\@makecaption{\hbox}{\fix@ieeecolor@hbox}{}{\FAILED}
\theoremstyle{definition}
\newtheorem{definition}{Definition}
\newtheorem{theorem}{Theorem}
\newtheorem{lemma}{Lemma}
\newtheorem{assumption}{Assumption}
\newtheorem{problem}{Problem}
\newtheorem{proposition}{Proposition}
\title{
Constraint-Driven Optimal Control for Emergent Swarming and Predator Avoidance
}
\author{Logan E. Beaver, \emph{Student Member, IEEE},  Andreas A. Malikopoulos, \emph{Senior Member, IEEE} 
	\thanks{L.E. Beaver and A.A. Malikopoulos are with the Department of Mechanical Engineering, University of Delaware, Newark, DE, USA (emails: lebeaver@udel.edu, andreas@udel.edu).}%
}
\begin{document}

\maketitle

\thispagestyle{empty}

\begin{abstract}
In this letter, we present a constraint-driven optimal control framework that achieves emergent cluster flocking within a constrained 2D environment.
We formulate a decentralized optimal control problem that includes safety, flocking, and predator avoidance constraints. We explicitly derive conditions for constraint compatibility and propose an event-driven constraint relaxation scheme, which we map to an equivalent finite state machine that intuitively describes the behavior of each agent in the system.
Instead of minimizing control effort, as it is common in the ecologically-inspired robotics literature, in our approach, we minimize each agent's deviation from their most efficient locomotion speed.
Finally, we demonstrate our approach in simulation both with and without the presence of a predator.
\end{abstract}

\begin{IEEEkeywords} 
biologically-inspired methods, complex systems, multi-agent systems, optimal control, optimization
\end{IEEEkeywords}

\section{Introduction}

\IEEEPARstart{M}{ulti-agent} systems have attracted considerable attention in many applications due to their natural parallelization, general adaptability, and ability to self-organize \cite{Oh2017}.
One emerging application of multi-agent systems is mimicking the aggregate motion of certain birds and fish, also known as cluster flocking or swarming \cite{Beaver2020AnFlockingb}.
There are several purported advantages of cluster flocking in biological systems, including predator avoidance and estimating total population \cite{Bajec2009OrganizedBirds}.

In this letter, we derive a distributed control algorithm that induces cluster flocking in a multi-agent system.
Prior work has primarily relied on reinforcement learning to achieve predator avoidance, including a multi-level approaches \cite{La2015MultirobotAvoidance} and policy sharing \cite{Morihiro2006EmergenceLearning,Hahn2019EmergentLearning}.
Traditional control approaches tend to achieve swarming behavior by implementing Reynolds flocking rules using potential fields \cite{Beaver2020AnFlockingb}.
These approaches have two major drawbacks. First, they inevitably drive agents into a regular lattice formation \cite{Olfati-Saber2006FlockingTheory}, which is not conductive to swarming. Second, potential fields are known to cause steady oscillation in agent trajectories and exacerbate deadlock in constrained environments \cite{Koren1991PotentialNavigation}.

In contrast to existing approaches, we propose a biologically-inspired approach based on an analysis of sand-eel schools in the presence of predators \cite{Pitcher1983Predator-avoidanceSplit}.
In this letter, we build on our previous work with set-theoretic control \cite{Beaver2020BeyondFlocking,Beaver2020Energy-OptimalConstraints,Beaver2020AnFlocking}, where we embed inter-agent and environmental interactions as state and control constraints in an optimal control problem.
Our set-theoretic approach has the advantage of being interpretable, i.e., the cause of an agents' action can be deduced by examining which constraints are currently active.
Our technical results are closely related to the control barrier function (CBF) literature, particularly multi-agent CBFs \cite{Wang2017SafetySystems}.
However, our approach does not require the constraints to be encoded as sub-level sets of a continuous function\textemdash we work with the sets directly.
We also propose a solution to the open problem of constraint incompatibility through an event-triggered constraint relaxation scheme.
Finally, we present a mapping between constraint-driven control and finite state machines, which provides a rigorous and interpretable description of each boids' behavior.
The contributions of this letter are:
\begin{enumerate}
    \item a decentralized optimal control algorithm that yields emergent swarming behavior (Problem \ref{prb:optimalControl}),
    \item an event-triggered scheme to selectively relax constraints and guarantee feasibility (Lemmas \ref{lma:conflict}--\ref{lma:joint}),
    \item a rigorous mapping between constraint-driven control and finite state machines (Proposition \ref{prp:stateMachine}), and
    \item simulation results demonstrating emergent cluster flocking and predator avoidance behaviors (Section \ref{sec:sim}).
\end{enumerate}

The remainder of the letter is organized as follows.
In Section \ref{sec:problem}, we formulate the cluster flocking problem and discuss our working assumptions.
In Section \ref{sec:solution}, we derive our optimal control policy, derive the safe action sets, and map the problem to a finite state machine.
In Section \ref{sec:sim}, we validate our results in two simulations with $15$ boids; the first demonstrates emergent cluster flocking, and the second demonstrates predator avoidance.
Finally, we draw conclusions and propose some directions for future research in Section \ref{sec:conclusion}.

\section{Problem Formulation} \label{sec:problem}

We consider a set of $N\in\mathbb{N}$ boids indexed by the set $\mathcal{B} = \{1, 2, \dots, N\}$.
Each boid $i\in\mathcal{B}$ obeys second-order integrator dynamics,
\begin{equation} \label{eq:dynamics}
    \begin{aligned} 
    \dot{\bm{p}}_i(t) = \bm{v}_i(t), \\
    \dot{\bm{v}}_i(t) = \bm{u}_i(t),
    \end{aligned}
\end{equation}
where $\bm{p}_i(t), \bm{v}_i(t) \in \mathbb{R}^2$ correspond to the position and velocity of each boid, and $\bm{u}_i(t) \in \mathbb{R}^2$ is the control input.
We also impose the state and control constraints,
\begin{align}
    \bm{p}_i(t) \in \mathcal{P}, \label{eq:pLim} \\
    \bm{u}_i(t) \in \mathcal{U}, \label{eq:uLim}
\end{align}
where $\mathcal{P}\subset\mathbb{R}^2$ is a non-empty intersection of half-planes and $\mathcal{U} = \{u\in\mathbb{R}^2 : ||u||_{\infty} \leq u_{\max} \}$ ensures the boids' do not exceed their maximum control input. 
We employ the infinity norm to simplify our mathematical exposition; however, the norm does not impose any restrictions in our approach.

We account for interactions between boids using Voronoi tessellation \cite{Fine2013UnifyingMembers}.
Under this approach, each boid is considered the center of a Voronoi cell.
We define the Voronoi set $\mathcal{V}(t) \in \mathcal{B}\times\mathcal{B}$ to contain $(i, j)$ and $(j, i)$ when the Voronoi cells $i$ and $j$ share a common edge.
Equivalently, the set $\mathcal{V}(t)$ is the Delaunay triangulation of the boids' positions.

\begin{definition}[Voronoi Neighborhood] \label{def:neighborhood}
The neighborhood of each boid $i\in\mathcal{B}$ is the set,
\begin{equation}
    \mathcal{N}_i(t) \coloneqq \big\{ j \in \mathcal{B} ~:~ (i, j) \in \mathcal{V}(t) \big\},
\end{equation}
where boid $i$ can receive information, via communication or sensing, with any other boid $j\in\mathcal{N}_i(t)$.
\end{definition}

As with $k-$nearest neighbors, the sensing radius of a Voronoi neighborhood may grow unbounded in general.
Similar to past work \cite{Cristiani2011EffectsGroups,Beaver2020BeyondFlocking}, we do not presume the boids possess infinite sensing capabilities; rather that Definition \ref{def:neighborhood} describes the interactions between boids over their relatively small separating distances.
One potential solution is to only consider Voronoi neighbors that are within a fixed sensing range \cite{Fine2013UnifyingMembers}, although results from biology demonstrate that this is, in general, unnecessary \cite{Ballerini2008InteractionStudy}.

Our objective is to generate emergent swarming behavior, such that the boids remain close to their neighbors to avoid predators \cite{Pitcher1983Predator-avoidanceSplit,Bajec2009OrganizedBirds,Hahn2019EmergentLearning}.
To achieve an aggregate swarming motion, we implement a variation of the disk flocking constraint proposed in \cite{Beaver2020BeyondFlocking}.
First, we determine the neighborhood center for each boid $i\in\mathcal{B}$,
\begin{equation} \label{eq:center}
    \bm{c}_i(t) = \frac{1}{|\mathcal{N}_i(t)|} \sum_{j\in\mathcal{N}_i(t)} \bm{p}_j(t).
\end{equation}
Note that Definition \ref{def:neighborhood} guarantees $|\mathcal{N}_i(t)| > 0$.
We use the neighborhood center to construct the relative position vector,
\begin{equation}
    \bm{r}_i(t) \coloneqq \bm{p}_i(t) - \bm{c}_i(t).
\end{equation}
Finally, to achieve swarming, we require each boid $i$ to approach and remain within a distance $R \in \mathbb{R}_{>0}$ of the neighborhood center, i.e.,
\begin{equation*}
g_i(\bm{r}_i(t), t)
    =
    \begin{cases}
    ||\bm{r}_i(t)|| - R \quad \text{if } ||\bm{r}_i(t)|| \leq R, \\
    \frac{||\dot{\bm{r}}_i(t)||}{u_{\max}}\bm{u}_i(t)\cdot\bm{r}_i(t) + \dot{\bm{r}}_i(t)\cdot\bm{r}_i(t) \text{ o.w.}
    \end{cases}
    \leq 0.
\end{equation*}
Note that the first case is trivially satisfied, i.e., the boid must remain within the disk while inside the disk.
Thus, we write
\begin{align} \label{eq:neighborhood}
    ||\bm{r}_i(t)|| > R
    &\implies \notag\\
    &\frac{||\dot{\bm{r}}_i(t)||}{u_{\max}}\bm{u}_i(t)\cdot\bm{r} + \dot{\bm{r}}_i(t)\cdot\bm{r}_i(t) \leq 0.
\end{align}
We emphasize that our objective is not to trap boid $i$ within the disk of radius $R$ centered at $\bm{c}_i(t)$.
Instead, we expect the switching neighborhood topology and dynamic motion of $\bm{c}_i(t)$ to drive the swarming behavior.
Additionally, the form of \eqref{eq:neighborhood} is inspired by energy-saving techniques in \cite{Zhou2017DistributedOptimization}.
Note that when boid $i$ travels in the ``correct" direction, i.e., $\dot{\bm{r}}_i(t)\cdot\bm{r} < 0$, the control action $\bm{u}_i(t)$ can take some values in the same direction as $\bm{r}_i(t)$.
However, when boid $i$ is traveling in the ``wrong" direction, i.e., $\dot{\bm{r}}_i(t)\cdot\bm{r}_i(t) > 0$, the control action $\bm{u}_i(t)$ must be at least partially opposed to $\bm{r}_i(t)$ to drive boid $i$ toward $\bm{c}_i(t)$.

Next, inspired by the empirical data collected on sand-eels \cite{Pitcher1983Predator-avoidanceSplit}, we model the predator as a ball of radius $\Gamma$.
We define the relative distance vector between each boid $i$ and the predator as,
\begin{equation}
    \bm{d}_i(t) \coloneqq \bm{p}_i(t) - \bm{o}_i(t),
\end{equation}
where $\bm{o}_i(t)$ is the position of the predator at time $t$.
To ensure predator avoidance, we select a value of $\Gamma$ larger than the diameter of the predator and employ a similar constraint to repel the boids,
\begin{align} \label{eq:predator}
    ||\bm{d}_i(t)|| < \Gamma
    &\implies \notag\\
    &-\frac{||\dot{\bm{d}}_i(t)||}{u_{\max}}\bm{u}_i(t)\cdot\bm{d} - \dot{\bm{d}}_i(t)\cdot\bm{d}_i(t) \leq 0.
\end{align}
With the constraints defined, our next objective is to design an optimal control problem such that the individual boid motion generates emergent swarming behavior.
To this end, we impose the following assumptions on our system.

\begin{assumption} \label{smp:continuous}
Each boid is equipped with a low-level controller that is capable of tracking the control input.
\end{assumption}
\begin{assumption} \label{smp:perfect}
Communication and sensing between the boids occurs instantaneously and noiselessly.
\end{assumption}

We impose Assumptions \ref{smp:continuous} and \ref{smp:perfect} to simplify our analysis and understand how the system performs in the ideal case.
Assumption \ref{smp:continuous} is common for trajectory generation problems, and it can be relaxed by introducing robust control terms or a safety layer, e.g., using a control barrier function \cite{Ames2019ControlApplications}.
Similarly, Assumption \ref{smp:perfect} can be relaxed by including time delays and uncertainty explicitly in the formulation and applying stochastic \cite{Dave2020a} or robust \cite{chalaki2021RobustGP} control techniques.

\begin{assumption} \label{smp:safety}
The boids have sufficient vertical space to avoid collisions without an explicit collision-avoidance constraint.
\end{assumption}

Assumption \ref{smp:safety} is common in 2D swarming applications \cite{Hahn2019EmergentLearning,Hahn2020ForagingLearning}.
Furthermore, it has been thoroughly demonstrated that adding an extra dimension of motion can significantly reduce the likelihood of collisions \cite{Morgan2016}.

\section{Solution Approach} \label{sec:solution}

We employ \emph{gradient flow} to generate the control input for each boid.
This a gradient-based optimization technique, wherein each boids' control action is a gradient descent step.
This technique has been used successfully to control multi-agent constraint-driven systems \cite{Notomista2019Constraint-DrivenSystems,Wang2017SafetySystems,Beaver2021Constraint-DrivenStudy}.
Our motivation for gradient flow is twofold.
First, it enables the boids to immediately react to their surroundings without the computational and communication costs associated with decentralized trajectory planning \cite{Beaver2020AnFlockingb}.
Second, it allows boids to freely enter and leave the domain, e.g., due to operating constraints, mechanical failure, or predation.
Each boid determines its current control action by solving an optimal control problem.
First, we derive a set of \emph{safe control inputs} and guarantee that it satisfies recursive feasibility, i.e., it remains non-empty for all future time.
For the remainder of our exposition, we omit the explicit dependence of state variables on $t$ when no ambiguity arises.

We start with the position constraint \eqref{eq:pLim}, which is not an explicit function of the control input.
Let $k = 1, 2, \dots, M$ index the $M$ hyperplanes that define the boundary of $\mathcal{P}$.
Each hyperplane $k = 1, 2, \dots, M$ consists of a normal vector  $\hat{\bm{n}}_k\in\mathbb{R}^2$ and offset $b_k\in\mathbb{R}$; 
the signed distance to the surface of hyperplane $k$ is,
\begin{equation} \label{eq:signedDist}
    d_{i,k} = \bm{p}_i \cdot \hat{\bm{n}}_k + b_k,
\end{equation}
for boid $i\in\mathcal{B}$.
Note that our convention assumes the normal vector $\bm{n}_k$ points away from the feasible region $\mathcal{P}$.
To guarantee constraint satisfaction, we require the derivative of \eqref{eq:signedDist} to be non-positive when the constraint is active, i.e.,
\begin{equation} \label{eq:constraint}
    \bm{p}_i\cdot\hat{\bm{n}}_k + b_k = 0 \implies \bm{v}_i\cdot\hat{\bm{n}}_k \leq 0.
\end{equation}
This safety constraint \eqref{eq:constraint} can be achieved by using a stopping distance constraint for each $k = 1, 2, \dots, M$ \cite{Beaver2021Constraint-DrivenStudy},
\begin{align} \label{eq:barrier}
    g_{ik} = \Big( \bm{p}_i\cdot\hat{\bm{n}}_k + b_k  \Big) + \alpha \frac{\Big( \bm{v}_i\cdot\bm{n}_k \Big)^2}{2 u_{\max}} \leq 0,
\end{align}
where $\alpha\in\mathbb{R}_{>0}$ is a parameter that determines the stopping distance.
Note that \eqref{eq:barrier} trivially satisfies \eqref{eq:constraint}.
This leads to our definition of the safe action set.
\begin{definition}[Safe Action Set] \label{def:safeSet}
For each boid $i\in\mathcal{B}$ at time $t$, the safe action set is,
\begin{equation}
\begin{aligned}
    \mathcal{A}_i^s(t) \coloneqq \Big\{ & \bm{u}_i \in \mathbb{R}^2 ~:~ ||\bm{u}_i||_{\infty} - u_{\max} \leq 0, \\
    & \Big(\bm{p}_i\cdot\hat{\bm{n}}_k + b_k \Big) + \alpha\frac{\Big(\bm{v}_i\cdot\hat{\bm{n}}_k \Big)^2}{2 u_{\max}} \leq 0, \notag\\ & \quad\quad \forall k = 1, 2, \dots, M \Big\}.
\end{aligned}
\end{equation}
\end{definition}

In our approach, we constrain the boids to remain within an axis-aligned rectangular domain, i.e., $\mathcal{P}$ is constructed from two pairs of parallel hyperplanes that intersect at right angles.
Next, we present a result that guarantees recursive feasibility for the safe action set.

\begin{theorem}\label{thm:recursion}

If a boid $i\in\mathcal{B}$ satisfies \eqref{eq:barrier} at some time $t$ for a rectangular domain $\mathcal{P}$, then any $\alpha \geq 1$ guarantees recursive feasibility of $\mathcal{A}_i^s(t)$ for all future time.
\end{theorem}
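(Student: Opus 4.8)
The plan is to establish that the stopping-distance region $\mathcal{C} \coloneqq \{(\bm{p}_i,\bm{v}_i) : g_{ik}\le 0 \text{ for all } k = 1,\dots,M\}$ is controlled invariant under the dynamics \eqref{eq:dynamics} subject to \eqref{eq:uLim}. Since a state lies in $\mathcal{C}$ exactly when $\mathcal{A}_i^s$ is non-empty, controlled invariance of $\mathcal{C}$ is precisely recursive feasibility of the safe action set. I would prove this by exhibiting, at every state in $\mathcal{C}$, an admissible control under which every $g_{ik}$ is non-increasing, and then invoking a standard subtangentiality (Nagumo-type) invariance argument; whether the barrier condition in Definition \ref{def:safeSet} is read as a state constraint or as its associated derivative condition on $\bm{u}_i$, this is the operative step.

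The first step is to exploit the axis-aligned rectangular structure of $\mathcal{P}$: its four bounding hyperplanes have unit normals $\pm\bm{e}_1,\pm\bm{e}_2$, so each barrier $g_{ik}$ depends only on the position and velocity components along a single coordinate axis, and the acceleration entering $\dot{g}_{ik}$ is the corresponding component of $\bm{u}_i$, whose magnitude is bounded by $u_{\max}$ through the $\infty$-norm constraint. Hence the problem decouples into two identical one-dimensional sub-problems: a double integrator $\ddot{x} = a$ with $|a|\le u_{\max}$, confined between two walls $x^{-} \le x \le x^{+}$, carrying the barriers $g^{\pm} = \pm(x - x^{\pm}) + \alpha\,\dot{x}^2/(2u_{\max}) \le 0$.

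For this one-dimensional sub-problem I would take the maximum-braking feedback $a^{\star} = -u_{\max}\,\sign(\dot{x})$ (and $a^{\star} = 0$ when $\dot{x} = 0$), which is admissible. Since $\dot{g}^{\pm} = \dot{x}\,(\pm 1 + \alpha a/u_{\max})$, substituting $a^{\star}$ gives $\dot{g}^{+} = (1-\alpha)\dot{x} \le 0$ for $\dot{x} > 0$ and $\dot{g}^{+} = (1+\alpha)\dot{x} < 0$ for $\dot{x} < 0$, and symmetrically $\dot{g}^{-}\le 0$ in both cases. This is exactly where $\alpha \ge 1$ enters: for the wall being approached it is precisely the condition that the deceleration $-u_{\max}$ is strong enough that the stopping-distance margin does not shrink, i.e.\ that $1-\alpha \le 0$; one checks, conversely, that with $\alpha < 1$ the corner state where both walls are active admits no admissible control keeping $g^{+}$ from increasing, so $\alpha \ge 1$ cannot be relaxed. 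Because $\dot{g}^{+}$ and $\dot{g}^{-}$ are both non-positive everywhere under $a^{\star}$, the intersection $\{g^{+}\le 0\}\cap\{g^{-}\le 0\}$ is forward invariant; applying $a^{\star}$ componentwise in 2D — which leaves $\|\bm{u}_i^{\star}\|_\infty = u_{\max}$, hence admissible — renders $\mathcal{C}$ forward invariant. As the boid lies in $\mathcal{C}$ at time $t$ by hypothesis, $\mathcal{A}_i^s(\tau) \ne \emptyset$ for every $\tau \ge t$.

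The step I expect to require the most care is verifying that one admissible control simultaneously respects all currently active barriers rather than just one at a time; the observation that makes the argument work is that maximum braking drives every $g_{ik}$ non-increasing at once, so even at a ``corner'' state where two opposing walls of one axis are active — which forces $|\dot{x}|$ to the single value $\sqrt{(x^{+}-x^{-})u_{\max}/\alpha}$ — the braking control is simultaneously valid for both (and the gradients of the two active barriers are independent there, so the Nagumo argument for an intersection of sets applies). A secondary technicality is that $a^{\star}$ is discontinuous at $\dot{x} = 0$, so a fully rigorous treatment should either pass to Filippov solutions or, more cleanly, argue pointwise: at each state on $\partial\mathcal{C}$ the vector $(\bm{v}_i,\bm{u}_i^{\star})$ is subtangent to $\mathcal{C}$, in that its inner product with the gradient of every active barrier is non-positive, which is all that Nagumo's theorem requires. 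I would present this pointwise version.
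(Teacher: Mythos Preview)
Your proposal is correct and matches the paper's strategy: show that at every state in $\mathcal{C}$ there is an admissible control making each $\dot{g}_{ik}\le 0$, with the axis-aligned $\infty$-norm bound letting the problem decouple per coordinate. The only substantive difference is the candidate control. The paper takes $\bm{u}_i = -\tfrac{u_{\max}}{\alpha}\hat{\bm{n}}_1 - \tfrac{u_{\max}}{\alpha}\hat{\bm{n}}_2$ (the minimal deceleration that sets $\dot{g}_{ik}=0$ for the two approached walls) and reads off $\alpha\ge 1$ from the admissibility requirement $\|\bm{u}_i\|_\infty = u_{\max}/\alpha \le u_{\max}$; you instead take full braking $-u_{\max}\,\sign(\dot{x})$ per axis and read off $\alpha\ge 1$ from $\dot{g}^{+}=(1-\alpha)\dot{x}\le 0$. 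These are the same inequality viewed from opposite ends. Your discussion of Nagumo-type invariance, simultaneous activity of opposing barriers at corner states, and the discontinuity of the sign feedback is more careful than the paper, which simply exhibits the candidate control, checks $\dot{g}_{ik}\le 0$ for all four $k$, and stops.
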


\begin{proof}
Let the rectangular domain $\mathcal{P}$ consists of four hyperplanes, indexed by $k = 1, 2, 3, 4$ such that $\hat{\bm{n}}_1 = -\hat{\bm{n}}_3$ and $\hat{\bm{n}}_2 = -\hat{\bm{n}}_4$.
Without loss of generality, let $\bm{v}_i\cdot\hat{\bm{n}}_1 > 0$ and $\bm{v}_i\cdot\hat{\bm{n}}_2 > 0$.
Taking the derivative of \eqref{eq:barrier} and combining terms yields,
\begin{equation} \label{eq:gDot1}
    \dot{g}_{ik} = \bm{v}_i\cdot\hat{\bm{n}}_k \Big( 1 + \frac{\alpha}{u_{\max}}\big(\bm{u}_i\cdot\hat{\bm{n}}_k\big)\Big).
\end{equation}
We seek a control input such that $\dot{g}_{ik} \leq 0$.
For $k = 1,2$, dividing by $\bm{v}_i\cdot\hat{\bm{n}}_k > 0$ yields a condition on $\bm{u}_i$,
\begin{align} \label{eq:un12}
    \bm{u}_i\cdot\hat{\bm{n}}_{1} \leq - \frac{u_{\max}}{\alpha}, \quad \bm{u}_i\cdot\hat{\bm{n}}_{2} \leq - \frac{u_{\max}}{\alpha}.
\end{align}
Similarly, for $k = 3, 4$, dividing by $\bm{v}_i\cdot\hat{\bm{n}}_k < 0$ implies,
\begin{align} \label{eq:un34}
    \bm{u}_i\cdot\hat{\bm{n}}_{3} \geq - \frac{u_{\max}}{\alpha}, \quad     \bm{u}_i\cdot\hat{\bm{n}}_{4} \geq - \frac{u_{\max}}{\alpha}.
\end{align}
Substituting $\hat{\bm{n}}_{1} = -\hat{\bm{n}}_{3}$ and $\hat{\bm{n}}_2 = -\hat{\bm{n}}_4$ into \eqref{eq:un34} yields the conditions,
\begin{equation} \label{eq:un342}
    \bm{u}_i\cdot\hat{\bm{n}}_{1} \leq \frac{u_{\max}}{\alpha}, \quad     \bm{u}_i\cdot\hat{\bm{n}}_{2} \leq \frac{u_{\max}}{\alpha}.
\end{equation}
Thus, to guarantee $g_{i,k}$ is nonincreasing, the control input must satisfy \eqref{eq:un12} and \eqref{eq:un342}, i.e.,
\begin{align}
    \bm{u}_{i}\cdot\hat{\bm{n}}_1 \leq - \frac{u_{\max}}{\alpha} \leq \frac{u_{\max}}{\alpha}, \\
    \bm{u}_{i}\cdot\hat{\bm{n}}_2 \leq - \frac{u_{\max}}{\alpha} \leq \frac{u_{\max}}{\alpha}.
\end{align}
This is satisfied by the candidate control action,
\begin{equation} \label{eq:thm1Control}
    \bm{u}_i = -\frac{u_{\max}}{\alpha}\hat{\bm{n}}_1 -\frac{u_{\max}}{\alpha}\hat{\bm{n}}_2,
\end{equation}
as $\hat{\bm{n}}_1 \cdot \hat{\bm{n}}_2 = 0$ by definition.
In our axis-aligned domain, the control constraint implies,
\begin{equation}
    ||\bm{u}_i||_{\infty} = \max\Bigg\{\frac{u_{\max}}{\alpha},  \frac{u_{\max}}{\alpha}\Bigg\} = \frac{1}{\alpha} u_{\max},
\end{equation}
which satisfies \eqref{eq:uLim} for $\alpha \geq 1$.
Finally, for the case that $\bm{v}_i\cdot\hat{\bm{n}}_k = 0$ for any $k = 1, 2, 3, 4$, the corresponding derivative $\dot{\bm{g}}_{ik} = 0$ for every control input.
\end{proof}

Thus, given a feasible initial state, Theorem \ref{thm:recursion} guarantees that each boid's trajectory will remain feasible indefinitely if its control action is selected from $\mathcal{A}_i^s(t)$.
In conjunction with the safe action set, we also impose the swarming and predator avoidance constraints, \eqref{eq:neighborhood} and \eqref{eq:predator}, to achieve emergent cluster flocking behavior.
The following results provide the conditions for constraint incompatibility, i.e., under what conditions the set of feasible control actions becomes empty.

\begin{lemma} \label{lma:conflict}
For a boid $i\in\mathcal{A}$, let $k = 1, 2$ index two perpendicular hyperplanes in the rectangular domain such that $\bm{v}_i\cdot\hat{\bm{n}}_k \geq 0$.
Then, if \eqref{eq:barrier} is strictly equal to zero and $||\bm{r}_i|| > R_i$, there is no feasible action if none of the conditions,
\begin{equation}
||\dot{\bm{r}}_i||\Bigg(
\begin{bmatrix}
1 \\ \alpha^{-1} \\ 1 \\ \alpha^{-1}
\end{bmatrix}
    (\hat{\bm{n}}_1\cdot\hat{\bm{r}}_i) + 
\begin{bmatrix}
1 \\ 1 \\ \alpha^{-1} \\ \alpha^{-1}
\end{bmatrix}
(\hat{\bm{n}}_2\cdot\hat{\bm{r}}_i)\Bigg) 
    \geq \dot{\bm{r}}_i\cdot\hat{\bm{r}}_i,
\end{equation}
hold at time $t$ for $k = 1, 2$.
\end{lemma}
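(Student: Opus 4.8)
The plan is to reduce the feasibility question to a finite vertex check on a rectangle in control space. The first step is to pin down the safe action set $\mathcal{A}_i^s(t)$ at this configuration. Because the hypotheses place boid $i$ at a corner of the rectangular domain, only the two perpendicular hyperplanes $k=1,2$ have $g_{ik}=0$; the opposing pair $k=3,4$ sits at strictly positive distance and is inactive (generically $g_{i3},g_{i4}<0$), so it imposes no constraint on $\bm{u}_i$. Applying the same manipulation of $\dot{g}_{ik}$ used in the proof of Theorem~\ref{thm:recursion}, keeping $g_{ik}\le 0$ forward in time when $g_{ik}=0$ and $\bm{v}_i\cdot\hat{\bm{n}}_k\ge 0$ is equivalent to $\bm{u}_i\cdot\hat{\bm{n}}_k\le -u_{\max}/\alpha$ for $k=1,2$. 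Combining this with $||\bm{u}_i||_\infty\le u_{\max}$ and using that in an axis-aligned domain $\hat{\bm{n}}_1\perp\hat{\bm{n}}_2$ are the coordinate axes, so that $||\bm{u}_i||_\infty=\max\{|\bm{u}_i\cdot\hat{\bm{n}}_1|,|\bm{u}_i\cdot\hat{\bm{n}}_2|\}$, shows $\mathcal{A}_i^s(t)$ is exactly the box $\mathcal{Q}=\{\bm{u}_i:\bm{u}_i\cdot\hat{\bm{n}}_k\in[-u_{\max},-u_{\max}/\alpha],\ k=1,2\}$, whose four vertices are $V=-a_1\hat{\bm{n}}_1-a_2\hat{\bm{n}}_2$ with $a_1,a_2\in\{u_{\max},u_{\max}/\alpha\}$.

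Next I would rewrite the active swarming constraint \eqref{eq:neighborhood} (active because $||\bm{r}_i||>R_i$) as a single half-plane in $\bm{u}_i$: dividing by $||\bm{r}_i||>0$ yields $\bm{u}_i\cdot\hat{\bm{r}}_i\le c$ with $c:=-\frac{u_{\max}}{||\dot{\bm{r}}_i||}(\dot{\bm{r}}_i\cdot\hat{\bm{r}}_i)$. The feasible control set is then $\mathcal{F}=\mathcal{Q}\cap\{\bm{u}_i:\bm{u}_i\cdot\hat{\bm{r}}_i\le c\}$. Since $\mathcal{Q}$ is a compact convex polytope and $\bm{u}_i\mapsto\bm{u}_i\cdot\hat{\bm{r}}_i$ is linear, its minimum over $\mathcal{Q}$ is attained at a vertex; hence $\mathcal{F}\neq\emptyset$ iff $\min_V V\cdot\hat{\bm{r}}_i\le c$, i.e., iff at least one of the four vertices satisfies $V\cdot\hat{\bm{r}}_i\le c$. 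Equivalently, $\mathcal{F}=\emptyset$ iff $V\cdot\hat{\bm{r}}_i>c$ for all four vertices.

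The final step is bookkeeping: substitute $V\cdot\hat{\bm{r}}_i=-a_1(\hat{\bm{n}}_1\cdot\hat{\bm{r}}_i)-a_2(\hat{\bm{n}}_2\cdot\hat{\bm{r}}_i)$ into $V\cdot\hat{\bm{r}}_i\le c$, multiply through by $-||\dot{\bm{r}}_i||/u_{\max}<0$ (which flips the inequality), and note that the four resulting inequalities, indexed by $(a_1,a_2)\in\{(u_{\max},u_{\max}),(u_{\max}/\alpha,u_{\max}),(u_{\max},u_{\max}/\alpha),(u_{\max}/\alpha,u_{\max}/\alpha)\}$, are exactly the four rows of the displayed system in the lemma. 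Thus a feasible action exists iff at least one of the four conditions holds, and negating this gives the statement of the lemma (indeed the stronger ``iff'' version).

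I expect the main obstacle to be the first step: justifying that $\mathcal{A}_i^s(t)$ collapses to precisely the rectangle $\mathcal{Q}$. This needs (i) the claim that at a corner the opposite pair $k=3,4$ contributes nothing, (ii) the axis-alignment argument that decouples the $\ell_\infty$ bound into one interval per axis, and (iii) care with the degenerate sub-case $\bm{v}_i\cdot\hat{\bm{n}}_k=0$, where $\dot{g}_{ik}\equiv 0$ and that hyperplane imposes no first-order constraint — there I would either restrict to $\bm{v}_i\cdot\hat{\bm{n}}_k>0$ or observe that the stated conditions are the conservative ones obtained by retaining the box. Everything after Step~1, namely the vertex-optimality argument and the sign-tracking algebra, is routine.
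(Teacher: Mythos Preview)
Your proposal is correct and follows essentially the same route as the paper: parametrize $\bm{u}_i$ in the orthonormal basis $\{\hat{\bm{n}}_1,\hat{\bm{n}}_2\}$, use the proof of Theorem~\ref{thm:recursion} to reduce $\mathcal{A}_i^s(t)$ to the box $u_k/u_{\max}\in[\alpha^{-1},1]$, and substitute the corner values into the (normalized) swarming inequality. Your version is somewhat more explicit---you spell out the vertex-optimality argument for the linear functional on the rectangle and flag the $\bm{v}_i\cdot\hat{\bm{n}}_k=0$ degeneracy---whereas the paper simply writes ``the result follows from substituting the bounds,'' but the underlying mechanism is identical.
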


\begin{proof}
Under the premise of Lemma \ref{lma:conflict}, we must determine when the control constraints,
\begin{align}
    \frac{||\bm{v}_i||}{u_{\max}}\bm{u}_i\cdot\hat{\bm{r}}_i + \dot{\bm{r}}_i\cdot\hat{\bm{r}}_i &\leq 0, \label{eq:lma1} \\
    \bm{v}_i\cdot\hat{\bm{n}}_k\big(1 + \frac{\alpha}{u_{\max}}(\bm{u}_i\cdot\hat{\bm{n}}_k)\big) &\leq 0, \label{eq:lma1b}
\end{align}
are incompatible.
First, $\dot{\bm{r}}_i = 0$ satisfies \eqref{eq:lma1} for any $\bm{u}_i$, thus, we may divide \eqref{eq:lma1} by $||\dot{\bm{r}}_i||$ and work with unit vectors for the remainder of the proof, i.e.,
\begin{equation} \label{eq:lma1a}
    \frac{1}{u_{\max}}\bm{u}_i\cdot\hat{\bm{r}}_i + \hat{\dot{\bm{r}}}_i\cdot\hat{\bm{r}}_i \leq 0.
\end{equation}
Next, we consider the control $\bm{u}_i = -u_1\hat{\bm{n}}_1 - u_2\hat{\bm{n}}_2$.
From the proof of Theorem \ref{thm:recursion}, \eqref{eq:uLim} and \eqref{eq:un12} imply that $u_1$ and $u_2$ must satisfy,
\begin{equation} \label{eq:uBounds}
    1 \geq \frac{u_1}{u_{\max}} \geq \frac{1}{\alpha}, \quad 1\geq \frac{u_2}{u_{\max}} \geq \frac{1}{\alpha}.
\end{equation}
The swarming constraint \eqref{eq:lma1a} becomes,
\begin{equation} \label{eq:lm1Inequality}
    \frac{u_1}{u_{\max}}(\hat{\bm{n}}_1\cdot\hat{\bm{r}}_i) + \frac{u_2}{u_{\max}}(\hat{\bm{n}}_2\cdot\hat{\bm{r}}_i)  \geq \hat{\dot{\bm{r}}}_i\cdot\hat{\bm{r}}_i.
\end{equation}
The result follows from substituting the bounds \eqref{eq:uBounds} into \eqref{eq:lm1Inequality}.
\end{proof}

\begin{lemma} \label{lma:flash}
For a boid $i\in\mathcal{A}$, let $k = 1, 2$ index two perpendicular hyperplanes in the rectangular domain such that $\bm{v}_i\cdot\hat{\bm{n}}_k \geq 0$.
Then, if \eqref{eq:barrier} is strictly equal to zero and $||\bm{d}_i|| > \Gamma$, there is no feasible action if none of the conditions,
\begin{equation}
||\dot{\bm{d}}_i||\Bigg(
\begin{bmatrix}
1 \\ \alpha^{-1} \\ 1 \\ \alpha^{-1}
\end{bmatrix}
    (\hat{\bm{n}}_1\cdot\hat{\bm{d}}_i) + 
\begin{bmatrix}
1 \\ 1 \\ \alpha^{-1} \\ \alpha^{-1}
\end{bmatrix}
(\hat{\bm{n}}_2\cdot\hat{\bm{d}}_i)\Bigg) 
    \leq \dot{\bm{d}}_i\cdot\hat{\bm{d}}_i,
\end{equation}
hold at time $t$ for $k = 1, 2$.
\end{lemma}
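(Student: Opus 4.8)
The plan is to reproduce the argument of Lemma~\ref{lma:conflict} with the predator offset $\bm{d}_i$ in place of $\bm{r}_i$, keeping track of the one sign change that distinguishes the repulsive constraint \eqref{eq:predator} from the attractive constraint \eqref{eq:neighborhood}. As in that proof I would fix the two perpendicular hyperplanes $k = 1, 2$ with $\bm{v}_i\cdot\hat{\bm{n}}_k \ge 0$, write the control in the orthonormal frame as $\bm{u}_i = -u_1\hat{\bm{n}}_1 - u_2\hat{\bm{n}}_2$, and recall from the proof of Theorem~\ref{thm:recursion} that \eqref{eq:uLim} together with the two active wall constraints \eqref{eq:un12} force $u_j/u_{\max} \in [\alpha^{-1}, 1]$ for $j = 1, 2$, i.e.\ \eqref{eq:uBounds} (the remaining two walls, and the case $\bm{v}_i\cdot\hat{\bm{n}}_k = 0$ where $\dot g_{ik} \equiv 0$, impose nothing further). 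So feasibility of the joint constraint set reduces to whether this box meets the half-plane carved out by \eqref{eq:predator}.

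Next I would push \eqref{eq:predator} through the same normalization used to obtain \eqref{eq:lma1a}: since $\dot{\bm{d}}_i = 0$ satisfies \eqref{eq:predator} for every $\bm{u}_i$, assume $\dot{\bm{d}}_i \ne 0$, divide out $\|\bm{d}_i\|$ and $\|\dot{\bm{d}}_i\|$, and substitute $\bm{u}_i = -u_1\hat{\bm{n}}_1 - u_2\hat{\bm{n}}_2$ to get
\begin{equation*}
\frac{u_1}{u_{\max}}\,(\hat{\bm{n}}_1\cdot\hat{\bm{d}}_i) + \frac{u_2}{u_{\max}}\,(\hat{\bm{n}}_2\cdot\hat{\bm{d}}_i) \;\le\; \hat{\dot{\bm{d}}}_i\cdot\hat{\bm{d}}_i ,
\end{equation*}
which is \eqref{eq:lm1Inequality} with the inequality reversed --- the two minus signs in \eqref{eq:predator} are precisely what turn ``$\ge$'' into ``$\le$''. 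Consequently feasibility is equivalent to the \emph{minimum} of the affine left-hand side over the box $[\alpha^{-1},1]^2$ being at most $\hat{\dot{\bm{d}}}_i\cdot\hat{\bm{d}}_i$; because that function is affine and the box compact, the minimum is attained at one of the four vertices $(u_1/u_{\max},\,u_2/u_{\max}) \in \{1, \alpha^{-1}\}^2$. Evaluating at the four vertices, multiplying back by $\|\dot{\bm{d}}_i\|$ (so that $\|\dot{\bm{d}}_i\|\,\hat{\dot{\bm{d}}}_i\cdot\hat{\bm{d}}_i = \dot{\bm{d}}_i\cdot\hat{\bm{d}}_i$), and stacking the four vertex coefficient patterns into the two column vectors reproduces exactly the displayed conditions. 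Hence no feasible control exists iff none of the four conditions holds, which is the claim.

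The only step that is not a mechanical transcription of Lemma~\ref{lma:conflict} is bookkeeping the sign: I must be sure that reversing \eqref{eq:predator} relative to \eqref{eq:neighborhood} propagates to ``\emph{minimize} the affine functional over the box'' rather than ``maximize'' it, so that the extremal vertices enter through a ``$\le \dot{\bm{d}}_i\cdot\hat{\bm{d}}_i$'' test. The secondary point worth stating carefully is that $\bm{u}_i = -u_1\hat{\bm{n}}_1 - u_2\hat{\bm{n}}_2$ with $u_j \in [u_{\max}/\alpha, u_{\max}]$ does parametrize \emph{all} controls that lie in $\mathcal{U}$ and keep every domain wall safe --- this is where orthonormality of $\{\hat{\bm{n}}_1, \hat{\bm{n}}_2\}$ and the $\|\cdot\|_\infty$ geometry of $\mathcal{U}$ enter, and it carries over verbatim from the proof of Theorem~\ref{thm:recursion}. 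Everything else is routine.
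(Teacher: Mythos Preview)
Your proposal is correct and follows exactly the route the paper intends: the paper's own proof of Lemma~\ref{lma:flash} simply states that it is identical to that of Lemma~\ref{lma:conflict} and omits the details, and you have spelled out precisely that identical argument while correctly tracking the single sign reversal coming from \eqref{eq:predator} versus \eqref{eq:neighborhood}. If anything, your write-up is more explicit than the paper's in noting why the vertex evaluation suffices (affine functional over a compact box) and why the $\hat{\bm{n}}_1,\hat{\bm{n}}_2$ parametrization covers all safe controls.
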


\begin{proof}
  The proof Lemma \ref{lma:flash} is identical to Lemma \ref{lma:conflict}, and thus we omit it.
\end{proof}

\begin{lemma} \label{lma:joint}
For a boid $i\in\mathcal{A}$, let $k = 1, 2$ index two perpendicular hyperplanes in the rectangular domain such that $\bm{v}_i\cdot\hat{\bm{n}}_k \geq 0$.
Then, if \eqref{eq:barrier} is strictly equal to zero, $||\bm{r}_i|| > R_i$, and and $||\bm{d}_i|| > \Gamma$, there is no feasible control action if the linear system of equations,
\begin{equation*}
    \begin{bmatrix}
    ||\dot{\bm{r}}_i||\hat{\bm{n}}_1\cdot\hat{\bm{r}}_i &
    ||\dot{\bm{r}}_i||\hat{\bm{n}}_2\cdot\hat{\bm{r}}_i \\
     -||\dot{\bm{d}}_i||\hat{\bm{n}}_1\cdot\hat{\bm{d}}_i &
     -||\dot{\bm{d}}_i||\hat{\bm{n}}_2\cdot\hat{\bm{d}}_i 
    \end{bmatrix}
    \begin{bmatrix}
    \frac{u_1}{u_{\max}} \\ \frac{u_2}{u_{\max}}
    \end{bmatrix}
    \geq 
    \begin{bmatrix}
    \dot{\bm{r}}_i\cdot\hat{\bm{r}}_i \\ -\dot{\bm{d}}_i\cdot\hat{\bm{d}}_i 
    \end{bmatrix}
\end{equation*}
has no solution that also satisfies $\frac{1}{\alpha} \leq \frac{u_1}{u_{\max}} \leq 1$ and $\frac{1}{\alpha} \leq \frac{u_2}{u_{\max}} \leq 1$.
\end{lemma}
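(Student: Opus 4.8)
The plan is to closely follow the proof of Lemma \ref{lma:conflict}, now carrying the predator-avoidance constraint \eqref{eq:predator} in parallel with the swarming constraint \eqref{eq:neighborhood}. Under the premises the boid sits at a corner of the rectangular domain, so the active safety constraints are exactly the two adjacent hyperplanes $k=1,2$ named in the statement (the opposing hyperplane to $k=1$ is then strictly satisfied, and likewise for $k=2$), while \eqref{eq:neighborhood} and \eqref{eq:predator} are both active. If $\dot{\bm r}_i=\bm 0$ the swarming constraint holds for every $\bm u_i$ and the claim collapses to Lemma \ref{lma:flash}; if $\dot{\bm d}_i=\bm 0$ it collapses to Lemma \ref{lma:conflict}; otherwise both are nonzero and, exactly as in \eqref{eq:lma1a}, we divide \eqref{eq:neighborhood} by $||\bm r_i||$ and \eqref{eq:predator} by $||\bm d_i||$ so that only the unit vectors $\hat{\bm r}_i$ and $\hat{\bm d}_i$ appear.

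Next I would parametrize the control input. Since $\{\hat{\bm n}_1,\hat{\bm n}_2\}$ is an orthonormal basis of $\mathbb{R}^2$, writing $\bm u_i=-u_1\hat{\bm n}_1-u_2\hat{\bm n}_2$ is a relabeling rather than a restriction, and $\bm u_i\cdot\hat{\bm n}_k=-u_k$ for $k=1,2$. Reusing the computation in the proof of Theorem \ref{thm:recursion}, $\dot g_{ik}\le 0$ with $\bm v_i\cdot\hat{\bm n}_k\ge 0$ forces $u_k\ge u_{\max}/\alpha$ (the case $\bm v_i\cdot\hat{\bm n}_k=0$ imposes nothing, since there $\dot g_{ik}\equiv 0$), while $||\bm u_i||_\infty\le u_{\max}$ in the axis-aligned domain forces $u_k\le u_{\max}$. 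Hence the safe controls compatible with the corner are precisely those with $(u_1/u_{\max},\,u_2/u_{\max})\in[\alpha^{-1},1]^2$, i.e.\ the box in the statement.

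Finally, I would substitute $\bm u_i=-u_1\hat{\bm n}_1-u_2\hat{\bm n}_2$ into the normalized forms of \eqref{eq:neighborhood} and \eqref{eq:predator}. As in the derivation of \eqref{eq:lm1Inequality}, the swarming constraint becomes the first row of the displayed $2\times 2$ system; the predator-avoidance constraint, which repels rather than attracts, yields the analogous expression in $\hat{\bm d}_i$ with the opposite overall sign, i.e.\ the second row. Multiplying the two inequalities back through by $||\dot{\bm r}_i||$ and $||\dot{\bm d}_i||$ puts them in the matrix form of the statement. Thus the parametrization is a bijection between the feasible controls and the points $(u_1/u_{\max},u_2/u_{\max})\in[\alpha^{-1},1]^2$ that satisfy that system; if no such point exists, then no feasible control exists, which is the claim.

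The step that needs care — rather than any genuine obstacle — is the bookkeeping just described: the signs of the two rows, the placement of the $||\dot{\bm r}_i||$ and $||\dot{\bm d}_i||$ factors, and the fact that it is the normalization of \eqref{eq:predator} by $||\bm d_i||$ (not by $||\dot{\bm d}_i||$) that leaves $\dot{\bm d}_i\cdot\hat{\bm d}_i$ on the right-hand side and $\hat{\bm d}_i$ inside the matrix. Note also that, unlike Lemma \ref{lma:conflict}, there is no reduction to the four corners of the box here, since the intersection of two half-planes with a box need not contain any corner of the box; the result is therefore necessarily stated as an emptiness condition for a linear program, and the proof's only job is to identify that program with the original control-feasibility problem.
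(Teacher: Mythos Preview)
Your proposal is correct and follows exactly the approach the paper takes: the paper's proof is the single sentence ``The proof of Lemma \ref{lma:joint} is constructed by satisfying Lemmas \ref{lma:conflict} and \ref{lma:flash} jointly,'' and you have simply written out what that sentence means, carrying the derivations of Lemmas \ref{lma:conflict} and \ref{lma:flash} side by side and assembling the two resulting rows into the displayed linear system. Your closing remark---that with two half-planes the feasible set need not contain a corner of the box, so the result must be stated as emptiness of a linear program rather than as four explicit inequalities---is a nice clarification of why Lemma \ref{lma:joint} is phrased differently from Lemmas \ref{lma:conflict} and \ref{lma:flash}.
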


\begin{proof}
 The proof of Lemma \ref{lma:joint} is constructed by satisfying Lemmas \ref{lma:conflict} and \ref{lma:flash} jointly.
\end{proof}

Generally, the problem of constraint incompatibility has been solved in the ecologically-inspired robotics literature by introducing slack variables \cite{Egerstedt2018RobotAutonomy,Ibuki2020Optimization-BasedBodies}.
However, it is unclear why one would add slack to the predator avoidance constraint when the premise of Lemma \ref{lma:flash}  is not satisfied.
For this reason, we use Lemmas \ref{lma:conflict}--\ref{lma:joint} to construct a finite state machine that completely describes the behavior of each boid. 
Note that defining an appropriate evasive behavior when Lemma \ref{lma:flash} holds, e.g., a fountain \cite{Berlinger2021Self-OrganizedCollective} or flash \cite{Pitcher1983Predator-avoidanceSplit} maneuver, is beyond the scope of this work; in our simulations (Section \ref{sec:sim}), we simply relax the predator-avoidance constraint.

\begin{proposition} \label{prp:stateMachine}
Each boid $i\in\mathcal{B}$ can be modeled as a finite state machine with three states: 1) \emph{Nominal}, which considers the safety, swarming, and predator avoidance constraints; 2) \emph{Strained}, which relaxes the swarming constraint; and 3) \emph{Evasive}, where the boid executes an evasive maneuver.
Boid $i$ transitions between these states based on whether the premises of Lemmas \ref{lma:conflict}--\ref{lma:joint} are satisfied at each time; this is presented in Fig. \ref{fig:stateMachine}.

\begin{figure}[ht]
    \centering
    \includegraphics[width=0.9\linewidth]{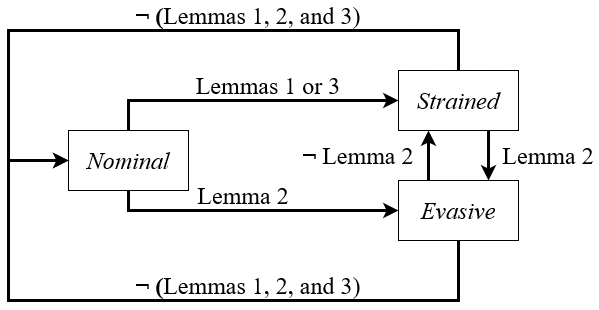}
    \caption{A finite state machine that describes each boids' feasible action space based on whether the premise of Lemmas \ref{lma:conflict}--\ref{lma:joint} are satisfied.}
    \label{fig:stateMachine}
\end{figure}

\end{proposition}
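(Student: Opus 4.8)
The plan is to exhibit the feasible control set of each boid as a finite intersection, use Theorem~\ref{thm:recursion} to identify which constraint families can ever render it empty, and then read the three states and their transitions directly off the hypotheses of Lemmas~\ref{lma:conflict}--\ref{lma:joint}. Concretely, write the feasible set at time $t$ as $\mathcal{A}_i(t)=\mathcal{A}_i^s(t)\cap\mathcal{A}_i^{g}(t)\cap\mathcal{A}_i^{d}(t)$, where $\mathcal{A}_i^{g}(t)$ and $\mathcal{A}_i^{d}(t)$ collect the controls satisfying the swarming constraint~\eqref{eq:neighborhood} and the predator-avoidance constraint~\eqref{eq:predator}. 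Two reductions make the picture finite: $\mathcal{A}_i^{g}(t)=\mathbb{R}^2$ whenever $\|\bm{r}_i\|\le R$ (the first branch of $g_i$ is trivial), and $\mathcal{A}_i^{d}(t)=\mathbb{R}^2$ whenever $\|\bm{d}_i\|\ge\Gamma$. Hence a conflict can arise only when the corresponding ``outside'' inequality holds while $\mathcal{A}_i^s(t)$ is simultaneously pinched, and for the axis-aligned rectangle the worst such pinching is caused by two perpendicular active half-planes with outward velocity $\bm{v}_i\cdot\hat{\bm{n}}_k\ge 0$ — which is exactly the standing hypothesis of Lemmas~\ref{lma:conflict}--\ref{lma:joint}.

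With this in hand I would assign states by a case split on emptiness. By Theorem~\ref{thm:recursion}, $\mathcal{A}_i^s(t)\neq\emptyset$ along any feasibly-initialized trajectory, so: (i) if in addition $\mathcal{A}_i(t)\neq\emptyset$, the boid is in the \emph{Nominal} state and solves Problem~\ref{prb:optimalControl} over all three constraint families; this covers both the case where no Lemma hypothesis is active and the case where one is active but a compatibility condition of Lemma~\ref{lma:conflict} or~\ref{lma:flash} holds, so feasibility is retained. (ii) If $\mathcal{A}_i(t)=\emptyset$ but $\mathcal{A}_i^s(t)\cap\mathcal{A}_i^{d}(t)\neq\emptyset$, then by Lemma~\ref{lma:conflict} (or Lemma~\ref{lma:joint}, when the predator constraint is also active) the swarming constraint is a culprit, and the boid enters the \emph{Strained} state, solving Problem~\ref{prb:optimalControl} with~\eqref{eq:neighborhood} relaxed. (iii) If $\mathcal{A}_i^s(t)\cap\mathcal{A}_i^{d}(t)=\emptyset$, then by Lemma~\ref{lma:flash} the predator-avoidance constraint is incompatible with safety itself; the boid enters the \emph{Evasive} state, relaxes~\eqref{eq:predator}, and executes an evasive maneuver — which need only lie in $\mathcal{A}_i^s(t)$ (optionally intersected with $\mathcal{A}_i^{g}(t)$ when that is still compatible), a non-empty set by Theorem~\ref{thm:recursion}.

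It remains to verify that (i)--(iii) form a well-defined finite state machine. The three cases are distinguished by the Boolean pair $\big(\mathcal{A}_i(t)=\emptyset,\ \mathcal{A}_i^s(t)\cap\mathcal{A}_i^{d}(t)=\emptyset\big)$, whose values $(\mathrm{F},\cdot)$, $(\mathrm{T},\mathrm{F})$, $(\mathrm{T},\mathrm{T})$ are precisely Nominal, Strained, Evasive; the combination $(\mathrm{F},\mathrm{T})$ cannot occur since $\mathcal{A}_i(t)\subseteq\mathcal{A}_i^s(t)\cap\mathcal{A}_i^{d}(t)$, so the cases are mutually exclusive and exhaustive. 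By the reductions of the first paragraph each of these Booleans is a function of whether the hypotheses of Lemmas~\ref{lma:conflict}--\ref{lma:joint} are met, and those hypotheses depend only on $(\bm{p}_i,\bm{v}_i,\bm{c}_i,\bm{o}_i)$, available instantaneously under Assumptions~\ref{smp:continuous} and~\ref{smp:perfect}; hence the transition map $\delta:\{\text{Nominal},\text{Strained},\text{Evasive}\}\times(\text{Boolean evaluations of Lemmas~\ref{lma:conflict}--\ref{lma:joint}})\to\{\text{Nominal},\text{Strained},\text{Evasive}\}$ is well defined and yields the diagram of Fig.~\ref{fig:stateMachine}: Nominal$\leftrightarrow$Strained toggled by the hypotheses of Lemma~\ref{lma:conflict}/\ref{lma:joint}, and an edge into Evasive from either state whenever the hypotheses of Lemma~\ref{lma:flash} hold, with the return edge when they cease. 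Since in every state the constraint set over which Problem~\ref{prb:optimalControl} is posed is non-empty — the full set in Nominal, and a superset of $\mathcal{A}_i^s(t)\neq\emptyset$ in Strained and Evasive — the machine never deadlocks.

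The step I expect to be the main obstacle is the exhaustiveness/well-posedness argument of the last paragraph: ruling out a fourth state by showing that relaxing the swarming constraint is always the correct first response and that the residual safety--predator conflict of Lemma~\ref{lma:flash} is the \emph{only} infeasibility that can survive it. This hinges essentially on Theorem~\ref{thm:recursion}, which makes ``relax everything except safety'' a genuine, always-available fallback. A secondary subtlety is the mixed configuration in which both~\eqref{eq:neighborhood} and~\eqref{eq:predator} are active but only jointly incompatible (Lemma~\ref{lma:joint}): one must confirm that dropping~\eqref{eq:neighborhood} before~\eqref{eq:predator} is consistent with the intended priority (safety over predator avoidance over swarming) and that the subsequent state is then determined solely by whether the hypotheses of Lemma~\ref{lma:flash} hold.
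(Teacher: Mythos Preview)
Your argument is sound and, in fact, considerably more detailed than what the paper offers. The paper does not prove Proposition~\ref{prp:stateMachine} at all: it is stated as a construction, with the figure serving as the entire content, and the text immediately moves on to the cost function. In that sense there is no ``paper's proof'' to compare against; the proposition functions as a design declaration rather than a theorem.

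What you have supplied is the justification the paper omits. Your decomposition $\mathcal{A}_i(t)=\mathcal{A}_i^s(t)\cap\mathcal{A}_i^g(t)\cap\mathcal{A}_i^d(t)$, the reduction via the trivial branches of \eqref{eq:neighborhood} and \eqref{eq:predator}, and the Boolean-pair exhaustiveness argument (ruling out $(\mathrm{F},\mathrm{T})$ by the inclusion $\mathcal{A}_i(t)\subseteq\mathcal{A}_i^s(t)\cap\mathcal{A}_i^d(t)$) are all correct and make explicit why three states suffice and why the machine never deadlocks. The priority ordering you identify---safety over predator avoidance over swarming---is exactly the one implicit in the paper's choice to relax \eqref{eq:neighborhood} first, and your use of Theorem~\ref{thm:recursion} as the guarantee that the Evasive fallback is always available is the right anchor. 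The ``main obstacle'' you flag is real but you have already handled it: once $\mathcal{A}_i^s(t)\neq\emptyset$ is secured, the only residual incompatibility after dropping swarming is the one characterized by Lemma~\ref{lma:flash}, so no fourth state is needed.
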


With the feasible action set and finite state machine defined,  each  boid $i\in\mathcal{B}$ also requires a notion of performance to select the ``best" control input.
Following the ecologically-inspired paradigm \cite{Egerstedt2018RobotAutonomy} would suggest minimizing the norm of the control input; this arguably yields a minimum effort policy.
However, we have previously demonstrated that selecting an appropriate objective function is critical to achieve a desired emergent behavior \cite{Beaver2021Constraint-DrivenStudy}.
As discussed in \cite{Pitcher1983Predator-avoidanceSplit}, sand-eels tend to cruise at a constant speed of approximately $2$ body lengths per second.
Thus, we require each boid to match an optimal swimming speed, denoted $||\bm{v}_i^*||$, as closely as possible, i.e.,
\begin{equation} \label{eq:cost}
    J_i\big(\bm{v}_i(t)\big) = \Big( ||\bm{v}_i(t)|| - ||\bm{v}^*||\Big)^2.
\end{equation}
We interpret the optimal swimming speed as being bio-mechanically advantageous, i.e., if $J = ||\bm{u}_i||$ minimizes energy consumption, then \eqref{eq:cost} corresponds to minimum-power locomotion.
Combining the cost \eqref{eq:cost} with the state-machine architecture outlined in Proposition \ref{prp:stateMachine} yields the optimal control problem solved by each boid.

\begin{problem} \label{prb:optimalControl}
For each boid $i\in\mathcal{B}$ at time $t$, apply the control action that solves,
\begin{align*}
    &\min_{\bm{u}_i(t)} \, \Big( ||\bm{v}_i(t)|| - ||\bm{v}^*||\Big)^2 \\
    \text{subject to:}&\\
    &\bm{u}_i(t) \in\mathcal{A}_i^s\big(t),\, \eqref{eq:dynamics},\, \eqref{eq:neighborhood},\, \eqref{eq:predator},
\end{align*}
where \eqref{eq:neighborhood} and \eqref{eq:predator} are relaxed according to Proposition \ref{prp:stateMachine}.
\end{problem}
The final step is to tune the system parameters, which we discuss, along with the simulation results, in the following section.

\section{Simulation} \label{sec:sim}

To validate our optimal control policy, we solved Problem \ref{prb:optimalControl} for $N = 15$ boids over a $120$ second time interval.
Next, we present our simulation parameters and the physical intuition behind them, followed by simulations that demonstrate the desired cluster flocking and predator avoidance behaviors.
Additional details and simulation videos can be found on the dedicated website of manuscript, \url{https://sites.google.com/view/ud-ids-lab/swarming}.

Based on the information given in \cite{Pitcher1983Predator-avoidanceSplit}, we selected a diameter of $5$ cm for each boid, which implies an optimal speed of approximately $12.5$ cm/s.
Intuitively, it is desirable for each boid $i\in\mathcal{B}$ to have a small actuation limit relative to the desired speed. Each boid ought to approach its neighborhood center $\bm{c}_i$ at a high speed, overshoot it, and circle back toward $\bm{c}_i$ in a wide arc.
This circling motion will also influence the topology of the Voronoi neighborhoods, which will further perturb the flock.
Ideally, these perturbations will push some boids to the edge of the flock to counteract flock collapse \cite{Olfati-Saber2006FlockingTheory}.
Additionally, we select a square domain $\mathcal{P}$ that is large enough for cluster flocking to occur.
We summarize our simulation parameters in Table \ref{tab:parameters}.

\begin{table}[ht]
    \centering
    \caption{Simulation parameters used to generate swarming behavior.}
    \begin{tabular}{cccccc}
        Domain Length (m) & $v^*$ (m/s) & $u_{\max}$ (m/s$^2$) & $R$ (cm) & $\Gamma$ (cm) \\
        \midrule
        6 & 0.125 & 0.1 & 2.5 & 25
    \end{tabular}
    \label{tab:parameters}
\end{table}

To simulate the swarming behavior, we initialize all boids at rest with random initial positions within the domain $\mathcal{P}$ such that none overlap.
At each time step, we solve Problem \ref{prb:optimalControl} and may relax some constraints according to Proposition \ref{prp:stateMachine}.
The behavior of the swarm is visualized in Figs. \ref{fig:swarming1} and \ref{fig:swarming2}, which show two time snapshots from the simulation.
Figure \ref{fig:swarming1} shows the initial behavior of the boids $21$ seconds into the simulation.
Starting near the center of the domain, the boids begin travelling in the north-western direction and exhibit a swirling motion.
This is is visible from their tails, which show $8$ seconds of trajectory history.
After reaching the north-western hyperplane, the boids quickly turn around and travel to the south-east.
Figure \ref{fig:swarming2} shows behavior qualitatively similar to the cruising behavior described by \cite{Pitcher1983Predator-avoidanceSplit}, where their velocities are relatively constant in direction and magnitude.

\begin{figure}[ht]
\centering
\includegraphics[width=\linewidth]{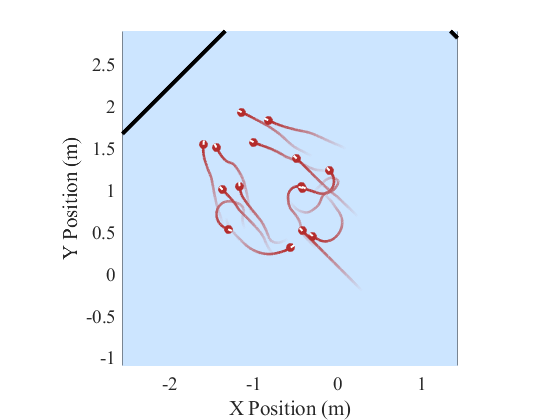}
\caption{Boids circling and forming the initial flock at approximately $t=21$ seconds; tails show $8$ seconds of trajectory history.}
\label{fig:swarming1}
\end{figure}

\begin{figure}[ht]
\centering
\includegraphics[width=\linewidth]{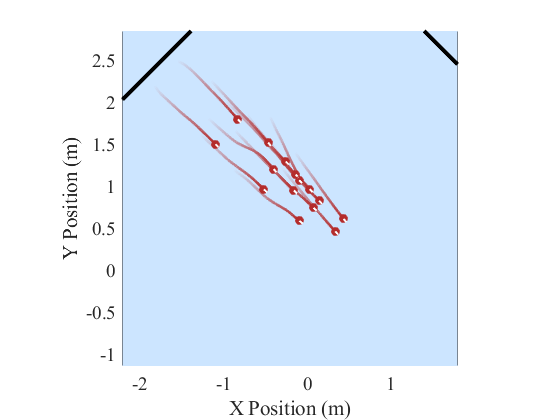}
\caption{Boids cruising to the south-east at approximately $t=85$ seconds after reaching the north-west wall and changing direction; tails show $8$ seconds of trajectory history.}
\label{fig:swarming2}
\end{figure}

Next, we introduce a simple predator model.
The data in \cite{Pitcher1983Predator-avoidanceSplit} implies that individual sand-eels treat predators as a moving obstacles.
In fact, they explicitly state that ``... the mackerel ate very few of the sand-eels throughout the duration of the experiment ...''\textemdash implying that the predator avoidance behavior ought to emerge without an antagonistic predator model.
With this justification, our predator follows a simple rule: orient toward the center of the boid flock and travel in a straight line for $8$ seconds.
The predator moves $20\%$ faster than the boids, and as such it is able to pass through the swarm and influence its behavior.
We found that $8$ seconds was a reasonable tradeoff to have the predator make several passes through the swarm without requiring significantly more simulation time.
As with the previous simulation, the flock quickly formed and began cruising across the domain.
The predator made multiple passes through the swarm, and each time the boids avoided the predator and quickly reformed.
A simulation snapshot is presented in Fig. \ref{fig:vacuole} near $t=52$ s, where the boids qualitatively exhibit the vacuole behavior seen in the sand-eel experiments \cite{Pitcher1983Predator-avoidanceSplit}.

\begin{figure}[ht]
    \centering
    \includegraphics[width=0.8\linewidth]{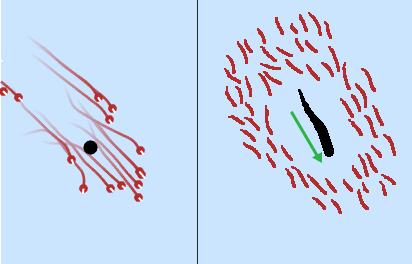}
    \caption{Left: apparent vacuole behavior exhibited by the boids the predator approaches from behind. Right: vacuole behavior observed in sand-eels, recreated from \cite{Pitcher1983Predator-avoidanceSplit}.}
    \label{fig:vacuole}
\end{figure}

Finally, we saved the size of each boids' neighborhood (Definition \ref{def:neighborhood}) at each time instant throughout the simulation.
A histogram of neighborhood size is given in Fig. \ref{fig:neighborhood} for the simulation containing the predator.
The distribution of neighborhood sizes is approximately Weibull, with $4$ neighbors being the most frequent.
This supports existing results in the biology literature \cite{Ballerini2008InteractionStudy}, which claims that only considering $3$--$5$ neighbors may be optimal for predator avoidance in 2D swarms.

\begin{figure}[ht]
    \centering
    \includegraphics[width=0.85\linewidth]{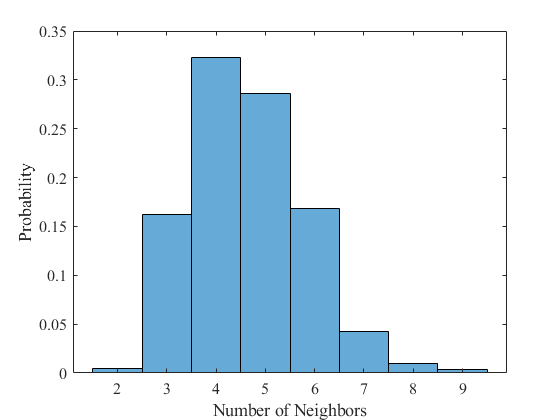}
    \caption{Neighborhood size histogram for $N = 15$ boids during the $120$ second simulation with a predator.}
    \label{fig:neighborhood}
\end{figure}

\section{Conclusion} \label{sec:conclusion}

In this letter, we constructed a decentralized control policy to generate emergent swarming behavior for boids operating in a constrained environment.
We extended current ecologically-inspired approaches beyond control minimization and instead considered an optimal speed.
We rigorously linked our event-triggered scheme for constraint relaxation to a finite state machine, which guarantees recursive feasibility without the use of slack variables.
To verify the emergence of swarming behavior, we performed two simulations; one with no predator, and the second with a velocity obstacle that tracks the centroid of the flock.

Future work includes extending our analysis to $\mathbb{R}^3$ with explicit collision avoidance constraints.
Further exploring the distribution of neighborhood size for Voronoi neighborhoods is another compelling direction\textemdash particularly whether these distributions only coincidentally agree with the optimal neighborhood sizes found in the biology literature.
Finally, experiments to replicate swarming behavior with physical robots will likely yield valuable insights.

\bibliographystyle{IEEEtran}
\bibliography{mendeley, IDS_Pubs}

\end{document}